\documentclass{amsart}
\usepackage{graphicx}
\usepackage{amssymb}
\usepackage{enumerate}

\usepackage{times}

\newtheorem{theorem}{Theorem}
\newtheorem{lemma}[theorem]{Lemma}
\newtheorem{definition}{Definition}


\def\N{\mathbb{N}}
\def\rarr{\rightarrow}
\def\Sep{\ ;\;}
\def\DC{DC}
\def\Orb{\mathop{\rm Orb}}

\begin{document}
\title{Omega-limit sets and invariant chaos in dimension one}
\author{Michal M\'alek}
\email{Michal.Malek@math.slu.cz}
\address{Mathematical Institute in Opava, Silesian University in Opava, 
  Na Rybn\'{\i}\v{c}ku~1, 746 01 OPAVA, Czech Republic}
\thanks{Preprint version. The final version will be published in Journal of Difference Equations and Applications (doi:10.1080/10236198.2015.1106485)}

\begin{abstract}
Omega-limit sets play an important role in one-dimensional 
dynamics. During last fifty year at least three definitions
of basic set has appeared. Authors often use results with different 
definition. Here we fill in the gap of missing proof of 
equivalency of these definitions.

Using results on basic sets  
we generalize results in paper [P.~Oprocha, 
Invariant scrambled sets and distributional chaos, 
Dyn. Syst. 24 (2009), no. 1, 31--43.] 
to the case continuous maps of finite graphs. 
The Li-Yorke chaos is weaker than positive topological 
entropy. The equivalency arises when we add condition 
of invariance to Li-Yorke scrambled set. 

In this note 
we show that for a continuous graph map properties 
positive topological entropy; horseshoe; invariant 
Li-Yorke scrambled set; uniform invariant distributional 
chaotic scrambled set and distributionaly chaotic pair 
are mutually equivalent.

\end{abstract}
\keywords{%
Dynamical system; graph; tree; 
topological entropy; invariant chaos; 
scrambled set; Li-Yorke pair;
horseshoe; distributional chaos; basic set.}
\subjclass[2000]{Primary 37B05; 37B40; 37E25.}
\maketitle

\section{Introduction and main result}

In their famous paper \cite{LY} Li and Yorke defined the notion of scrambled 
set. Their approach of a set containing points which form so-called 
Li-Yorke pair then became one of the most acceptable definitions of 
deterministic chaos. 
In 1986 Sm\'\i tal (\cite{Sm}) proved that Li-Yorke chaos is weaker than 
positive topological entropy, for interval maps. 

Since Li and Yorke paper many other definitions of chaos were offered. One 
of the important generalizations has been provided by Schweizer and 
Sm\'\i tal in \cite{ScSm}. 
Popularity of their distributional chaos comes from the fact that it is 
equivalent to positive topological entropy in one dimensional cases, 
contrary to Li-Yorke chaos.
In this note we focus on chaos of maps of topological graphs.

An {\em arc} is any topological space homeomorphic to the compact
interval $[0,1]$.  A {\em graph} is a continuum (a nonempty compact 
connected metric space) which can be written as the union of finitely 
many arcs any two of which can intersect only in their endpoints 
(i.e., it is a one-dimensional compact connected polyhedron).
We endow a graph $G$ with metric $\varrho$ of the shortest path. In 
such setting we consider a continuous map $f \colon G \rightarrow G$. 

For any $x \in G$, the set of accumulation points of the sequence 
$(f^{n}(x))_{n=1}^{\infty}$ is called the $\omega$-limit set of $x$.
Let $A \subset G$, by $\Orb A$ we understand the set $\{f^{n}(x); 
x \in A \hbox{ and } n\in\N\}$.
%
We say that $f$ has a {\em horseshoe} ($f$ is {\em turbulent}) 
if there are disjoint arcs $U$ and $V$ such that 
$$f(U)\cap f(V)\supset U\cup V.$$

We say that $f$ has a {\em Li-Yorke pair} if there are points $x,y$ such 
that 
$$
  \liminf_{n \rightarrow \infty} \varrho(f^{n}(x), f^{n}(y)) = 0,
  \quad \hbox{and} \quad
  \limsup_{n \rightarrow \infty} \varrho(f^{n}(x), f^{n}(y)) > 0.
$$

Now we proceed with the definition of distributional chaos.
For any two points $x,y\in G$, any positive integer $n$, and any 
real $t$ we define 
$$ 
  \xi (x,y,n,t) 
                = \#\{i\Sep 0 \leq i < n\hbox{ and } \varrho(f^{i}(x),f^{i}(y)) < t \}. 
$$
Now define the {\em upper distributional} and 
{\em lower distributional functions} of the points $x$ and $y$ by 
formulas 
$$ 
  F^{xy} (t) = \limsup_{n\rightarrow\infty} \frac{1}{n} \xi (x,y,n,t), 
$$ 
$$ 
  F_{xy} (t) = \liminf_{n\rightarrow\infty} \frac{1}{n} \xi (x,y,n,t) 
$$ 
respectively. 
The points $x$ and $y$ form a {\em \DC1-pair} for $f$ if 
$$
  F^{xy}\equiv \chi_{(0,\infty)} \ {\rm and}  \  F_{xy}(\varepsilon)=0, 
    \ {\rm for \ some}  \ \varepsilon >0,
$$
they form a {\em \DC2-pair} if 
$$
  F^{xy}\equiv \chi_{(0,\infty)} \ {\rm and}  \  F_{xy}(0+) < 1, 
$$
and finally they form a {\em \DC3-pair} if 
$$
  F^{xy}(t) > F_{xy}(t),  
    \ {\rm for \ any} \ t \ {\rm in \ an \ interval}.
$$
Obviously every \DC1-pair is a \DC2-pair and every \DC2-pair is a \DC3-pair.

%
The notion of distributional chaos was introduced in \cite{ScSm}, but without 
naming versions explicitly. In the same paper there was proved that for interval 
maps all three versions are mutually equivalent and also equivalent to 
positive topological entropy. 
These three versions was explicitly named in \cite{S2, BS2} and 
discussed their properties in general spaces and particularly in the case of 
skew-product maps.
The relations between the versions of distributional chaos as well as the 
relations to other important notions as topological entropy, horseshoes and 
other types of chaos was studied in the case of circle, graphs and dendrites 
(see \cite{HM,MO}). 
In the case of graph maps all three types are mutually equivalent and therefore 
we can formulate the last condition of our main theorem for any type of \DC-pair.

Let $S \subset G$ be a set containing more than one point. 
We say that $S$ is a {\em Li-Yorke scrambled set} if any two different points 
from $S$ form a Li-Yorke pair. Similarly we define {\em \DC1 (\DC2 and \DC3) 
scrambled set}. 
There are several ways defining $f$ to be chaotic (in Li and Yorke 
or distributional case) based on the size of its scrambled set --- two-point, 
infinite or uncountable. 
We refer to \cite{GL} for further reading on the size of scrambled sets.
In the case of continuous graph maps the existence of a two-point distributional 
scrambled set is equivalent to the existence of an uncountable one, similarly 
for Li-Yorke scrambled sets (see \cite{K} for details).

If a scrambled set $S$ satisfies condition $f(S) \subset S$ 
we call it {\em invariant.} 
As it was mentioned, the existence of a Li-Yorke scrambled set is weaker than
positive topological entropy. This is no longer true when we add the assumption 
of invariance of a scrambled set. This was proved in \cite{O} for interval maps 
and here we generalize the results for the case of graph maps.

A \DC1 scrambled set $S$ is called {\em uniform} if there is an $\varepsilon > 0$ such 
that $F_{xy}(\varepsilon) = 0$, for any pair of different points $x$ and $y$ 
from $S$.

We recall the definition of the topological entropy by Bowen (\cite{Bo}) and
Dinaburg (\cite{D}). Let $\varepsilon > 0$ and $n$ be a positive integer. 
A set $A \subset G$ is an \textit{$(n, \varepsilon)$-separated set} if for each
$x,y \in A$, $x \neq y$, there is an integer $i$, $0 \leq i < n$, such that 
$\varrho (f^{i}(x),f^{i}(y)) > \varepsilon$. 
Let $s_{n}(\varepsilon)$ denote the maximal cardinality of all $(n, \varepsilon)$-separated sets. 
The {\em topological entropy of $f$} is 
\begin{equation}
  h(f) = \lim_{\varepsilon \rightarrow 0} 
         \limsup_{n \rightarrow \infty} \frac{1}{n} \log s_{n}(\varepsilon).\label{eq-entropy} 
\end{equation}

Let $f\colon G \rightarrow G$ and $g\colon K \rightarrow K$ be continuous 
maps of graphs. A continuous surjection $\varphi\colon G \rightarrow K$ 
which is monotone (i.e. $\varphi^{-1}(y)$ is connected for all $y \in K$) 
is called {\em semiconjugation} if $\varphi \circ f = g \circ \varphi$. 
Moreover if there exists an $k \in \N$ such that 
$\#\varphi^{-1}(x) \leq k$ for each $y \in K$ then we say that $\varphi$ 
semiconjugates $f$ and $g$ {\em almost exactly}.

%
\section{Basic sets}
In the sixties, A.~N.~Sharkovsky has systematically studied properties of 
the $\omega$-limit sets of the  continuous maps of the interval (cf., e.g.,  
\cite{Sh2} and \cite{Sh3}). He shown that any such  set is contained 
in a unique maximal $\omega$-limit set and introduced three types of maximal 
$\omega$-limit set: cycle, first type (lately known as solenoids) and second 
type (lately known as basic sets). 

\begin{definition}[S-basic set]
A maximal $\omega$-limit set $\omega$ for continuous map of compact interval
is called a basic set if $\omega$ is infinite and it contains a periodic point. 
\end{definition}
Although Sharkovsky's definition was originally formulated for the case of 
continuous maps of compact interval, it can be used in the same form  
for graph maps too.

Lately in the eighties A. Blokh widely extend properties of basic sets 
but using different definition (cf. e.g. \cite{Bl}). 
\begin{definition}[B-basic set]
Let I be an $n$-periodic interval, $\Orb I = M$. Consider a set $B(M,f) 
= \{ x\in M: $ for any relative neighborhood $U$ of $x$ in $M$ we have 
$\overline{\Orb U} = M \}$. Set $B(M,f)$ is called a basic set if it is 
infinite.  
\end{definition}
This definition is used, without explicit formulation, in papers \cite{Bl123} 
for the case of graph maps by taking periodic closed connected subgraph 
instead of $I$.

In paper \cite{HM} was Sharkovsky's classification of maximal $\omega$-limit sets 
extended to the case of graph maps. It was necessary to add fourth type 
of maximal $\omega$-limit sets to cover graph specific cases of $\omega$-limit
sets (singular sets). 
Let ${\omega}$ be a maximal $\omega$-limit set of a continuous map $f$ 
of graph. Put
$$
P_{{\omega}}=\bigcap_{U}\overline{\Orb U}
$$
where $U$ is taken over all neighborhood intersecting $\omega$.  
Note that $P_{\omega}$ is closed and strongly invariant i.e. 
$f(P_{\omega}) = P_{\omega}$.
Using cardinality of $P_{\omega}$ and relation $\omega$ to the set 
of periodic points we can divide the set of all maximal 
$\omega$-limit sets into four classes.

If $\omega$ is finite then it is a {\em cycle}. Now consider $\omega$ to be infinite.
If $P_{\omega}$ is a nowhere dense set then we call $\omega$ a {\em solenoid}.  
If $P_{\omega}$ consists of finitely many connected
components and $\omega$ contains no periodic point then we call
$\omega$ a {\em singular set}.  
\begin{definition}[HM-basic set]
If $P_{\omega}$ consists of finitely many connected components and
$\omega$ contains a periodic point then $\omega$ is called a {\em basic
set}.  
\end{definition}

Use of results on basic sets under different definitions become folklore nowadays. 
Here we offer proof of equivalency of mentioned definition of basic sets.   

\begin{theorem}
Definitions of S-basic set, B-basic set and HM-basic set are mutually equivalent. 
\end{theorem}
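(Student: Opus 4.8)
The plan is to prove the two equivalences ``S-basic $\Leftrightarrow$ HM-basic'' and ``S-basic $\Leftrightarrow$ B-basic''; together they give the statement. One implication is immediate: an HM-basic set is by its very definition an infinite maximal $\omega$-limit set containing a periodic point, hence S-basic. Thus the content lies in the three remaining implications.

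First I would show that an S-basic set $\omega$ is HM-basic. By the classification recalled above, $\omega$ is a cycle, a solenoid, a singular set or a basic set, and these four classes are exhaustive because for infinite $\omega$ the set $P_{\omega}$ is either nowhere dense or has finitely many components --- and here the two options are incompatible, since $P_{\omega}\supseteq\omega$ is infinite while a closed set with finitely many components which is infinite must contain a non-degenerate subcontinuum, hence an arc. Being infinite, $\omega$ is not a cycle; containing a periodic point, it is not a singular set. It remains to rule out that $\omega$ is a solenoid, and for this I would invoke the structure of solenoidal $\omega$-limit sets (see \cite{HM}, with \cite{Sh2,Sh3} for the interval model): such a set is confined to $\bigcap_{j}\overline{\Orb I_{j}}$ for a nested sequence of periodic subgraphs $I_{j}$ with periods $n_{j}\to\infty$, $n_{j}\mid n_{j+1}$, so that comparing, inside the $n_{j}$-cycle $I_{j},f(I_{j}),\dots$, the piece containing a periodic point $p\in\omega$ of period $m$ with its $f^{m}$-image forces $n_{j}\mid m$ for every $j$ --- impossible. (Making this fully precise requires a little care about the boundary points of the $I_{j}$, which is where I expect the first technical work.) Hence $P_{\omega}$ has finitely many components and $\omega$ is HM-basic.

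Second, for ``B-basic $\Rightarrow$ S-basic'' I would quote Blokh's structure theory of the sets $B(M,f)$ (\cite{Bl,Bl123}): when $B(M,f)$ is infinite it is a maximal $\omega$-limit set --- indeed $B(M,f)=\omega(x)$ for a residual set of $x\in M$ --- on which, after passing to a power of $f$ and a cyclic decomposition, $f$ is topologically mixing, with positive entropy, and in which periodic points are dense. In particular $B(M,f)$ contains a periodic point, so it is S-basic; combined with the previous paragraph it is HM-basic as well.

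Finally, the implication ``S-basic $\Rightarrow$ B-basic'' is the delicate one. Given an S-basic set $\omega$ --- equivalently, by the first step, an HM-basic set, so $P_{\omega}$ has finitely many components permuted by $f$ (recall $f(P_{\omega})=P_{\omega}$) --- one must produce a periodic subgraph $I$ with $B(\Orb I,f)=\omega$. Here I would rely on the structural description of basic sets of Sharkovsky, Blokh and \cite{HM}: a basic set carries positive topological entropy, hence some power of $f$ has a horseshoe sitting in a neighbourhood of $\omega$, and from this, together with the finitely many components of $P_{\omega}$, one builds a connected subgraph $I$ with $f^{r}(I)\supseteq I$ for a suitable $r$ and with $\overline{\Orb I}$ confined to the region governed by $\omega$; passing to $\overline{\bigcup_{k}f^{kr}(I)}$ (again a subgraph) makes it honestly periodic, and $M:=\Orb I$ has finitely many components. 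One then verifies the two inclusions: $\omega\subseteq B(M,f)$, because every relative neighbourhood in $M$ of a point of $\omega$ has orbit closure all of $M$ (the mixing structure on $\omega$ together with density of periodic points in $M$), and $B(M,f)\subseteq\omega$, because a point all of whose neighbourhoods have orbit closure $M\supseteq\omega$ must lie in the recurrent behaviour of the adapted set $M$, which is exactly $\omega$. I expect this construction-and-verification to be the main obstacle: it is not a soft argument and leans squarely on the graph-dynamical machinery of Blokh and of \cite{HM}, whereas the remaining implications are comparatively routine bookkeeping once that machinery is in place.
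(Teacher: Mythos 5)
Your skeleton matches the paper's: both arguments run through the four-class decomposition of maximal $\omega$-limit sets from \cite{HM} (cycle, solenoid, singular set, basic set), the implication HM-basic $\Rightarrow$ S-basic is immediate from the definitions in both, and B-basic $\Rightarrow$ S-basic is in both cases a quotation of Blokh's structure theory (\cite{Bl}, \cite{Bl123}) giving maximality, a periodic point, and positive entropy on $B(M,f)$. One sub-argument is genuinely different: to exclude the solenoid case for an S-basic set you use divisibility of the periods of the nested periodic subgraphs against the period of a cycle in $\omega$, which is workable but requires the boundary bookkeeping you yourself flag; the paper instead cites \cite[Theorem 3.7]{ScSm} to get that $\overline{\Orb U}$ covers $\omega$ for every neighbourhood $U$ of a point of $\omega$, which contradicts $P_{\omega}$ being nowhere dense. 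Either route is fine.

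The real divergence, and the one place your sketch has a gap, is S-basic $\Rightarrow$ B-basic. You treat the production of a periodic subgraph $I$ with $B(\Orb I,f)=\omega$ as a hard construction to be extracted from a horseshoe near $\omega$, and you leave both the identity $\overline{\Orb I}=P_{\omega}$ and the inclusion $B(M,f)\subseteq\omega$ as unverified claims; as stated, nothing forces the subgraph you build to generate the correct $M$. The paper's point is that no construction is needed: once $\omega$ is known to be HM-basic, the set $P_{\omega}$ is itself a finite union of periodic subgraphs (it is closed, strongly invariant, and has finitely many connected components, which $f$ must permute), so one simply takes $M=P_{\omega}$. The defining property of $B(M,f)$ at every point $x\in\omega$ is then immediate from the definition $P_{\omega}=\bigcap_{U}\overline{\Orb U}$: any relative neighbourhood of $x$ in $M$ has orbit closure containing $M$. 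Replacing your horseshoe construction by this choice of $M$ closes the gap and reduces the ``delicate'' implication to a one-line observation.
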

\begin{proof}
  Obviously every HM-basic set is S-basic set. Conversely, let $\omega$ is 
S-basic set, then by must be one of class (in sense of \cite{HM}) i.e. cycle, 
solenoid, singular set and HM-basic set. Since $\omega$ is infinite it 
cannot be cycle. It also cannot be singular set because $\omega$ contains 
a cycle. And if $U$ is a neighborhood of a point of $\omega$ then by
\cite[Theorem 3.7]{ScSm} $\overline{\Orb U}$ covers $\omega$. Therefore 
$\omega$ cannot be solenoid and $\omega$ must be HM-basic set. 

	Let $\omega$ is a HM-basic set. Then $P_{\omega}$ is finite union 
of periodic subgraphs and we put $M=P_{\omega}$. For each $x \in \omega$ 
and its neighborhood $U$ we have $\overline{\Orb U} \supset M$. This 
follows that $\omega$ is B-basic set. 
Now let $\omega$ be $B$-basic set. Again we use fact that $\omega$ must be one 
of mentioned class. It cannot be a cycle since it is infinite and 
singular set as well because it contains periodic point (see \cite[Theroem 4.1]{Bl123}). 
Finally by \cite{Bl123} $\omega$ is maximal and $f$ has positive topological entropy on $\omega$ 
which is impossible on solenoids. Therefore $\omega$ is a HM-basic set.
\end{proof}

Having equivalency of mentioned definition we can formulate the following 
theorem based on results from \cite{Bl}, \cite{Bl123} and \cite{HM}.
\begin{theorem}\label{PropBasSets}
Let $f$ be a continuous graph map and $\omega$ its basic set then 
\begin{enumerate}[(i)]
  \item $\omega$ is a perfect set;
  \item the system of all basic sets of f is countable;
  \item $h(f) \geq (\log 2)/(2n)$, where $n$ is number of periodic portion of $\omega$;
  \item periodic points are dense in $\omega$;
  \item $f|_\omega$ is semiconjugated with $g$ almost exactly, where $g$ is a continuous 
        transitive map of a graph;
  \item $f^{n}$ has a horseshoe, for some $n$;
  \item let $P_\omega$contains no proper periodic subgraph,  
        $U \subset \mathop{\rm int} P_\omega$ and $J \cap \omega$ be infinite then 
        $f^{n}(J) \supset U$ for sufficiently large $n$.
\end{enumerate} 
\end{theorem}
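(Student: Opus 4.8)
The plan is to exploit the equivalence just established: since $\omega$ is simultaneously an S-basic, a B-basic and an HM-basic set, for each of the seven items we may invoke whichever description is most convenient, and in particular the structure theory of Blokh for B-basic sets \cite{Bl,Bl123} together with the classification of \cite{HM}. Throughout write $M=P_\omega$; by the HM-description this is a union of $n$ pairwise disjoint periodic subgraphs cyclically permuted by $f$, each nondegenerate and hence with nonempty interior, while the B-description presents $\omega$ (up to the usual modifications) as the set $B(M,f)$ of points all of whose relative neighbourhoods $U$ in $M$ satisfy $\overline{\Orb U}=M$.

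Items (i), (iv) and (ii) come essentially for free from this picture. Density of periodic points in $\omega$ is part of Blokh's analysis of $B(M,f)$, which gives (iv); and since $\omega$ contains no isolated point --- again by \cite{Bl123}, or from transitivity of the factor produced in (v) --- the closed set $\omega$ is perfect, which is (i). For (ii), associate to each basic set its periodic subgraph $M=P_\omega$: two maximal $\omega$-limit sets with the same $P_\omega$ must coincide, each such $P_\omega$ has nonempty interior, and a graph being separable admits only countably many subgraphs up to this identification, so there are only countably many basic sets.

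Next I would treat (vi) and (iii) together. Passing to the first-return map $f^{n}$ of one of the $n$ periodic portions, Blokh's results give that this map is transitive with a fixed point and positive entropy, whence one extracts disjoint arcs $U,V\subset\mathop{\rm int}M$ with $f^{2n}(U)\cap f^{2n}(V)\supset U\cup V$ --- the doubling being the familiar passage from a transitive graph map to a turbulent iterate; this is (vi). A horseshoe for $f^{2n}$ forces $h(f^{2n})\ge\log 2$ by the standard counting of admissible itineraries among $(m,\varepsilon)$-separated sets, and since $h(f^{2n})=2n\,h(f)$ we obtain $h(f)\ge(\log 2)/(2n)$, which is (iii).

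The real content sits in (v) and (vii). For (v) I would invoke Blokh's structure theorem: collapsing to points the closures of the connected components of $M\setminus\omega$ (the ``gaps'' of $\omega$ in $M$) produces a monotone surjection $\varphi\colon M\to K$ onto a graph $K$ with $g=\varphi f\varphi^{-1}$ a transitive graph map, so that $\varphi$ semiconjugates $f|_{M}$, hence $f|_\omega$, with $g$; the bound $\#\varphi^{-1}(y)\le k$ (``almost exactly'') is the genuinely graph-theoretic point, since the nondegenerate fibres pull back to subgraphs of $M$ with pairwise disjoint interiors, so only finitely many are nondegenerate and each meets $\omega$ in boundedly many points. For (vii), when $P_\omega$ has no proper periodic subgraph the factor $g$ is totally transitive, and Blokh's covering lemma for basic sets says precisely that any $J$ with $J\cap\omega$ infinite satisfies $f^{m}(J)\supset M$ for all large $m$, hence $f^{m}(J)\supset U$ once $U\subset\mathop{\rm int}P_\omega$. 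The chief difficulty throughout is organisational rather than conceptual: keeping the period $n$ and the factor-of-two bookkeeping consistent across (iii), (vi) and (vii), and verifying that the lemmas of \cite{Bl} --- stated there for the interval --- transfer verbatim, via \cite{Bl123} and the equivalence theorem, to an arbitrary graph map; the ``almost exactly'' clause in (v) is the single place where an argument specific to graphs is actually required.
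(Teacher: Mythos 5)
The paper offers no proof of this theorem at all: it is stated as a compilation of results from \cite{Bl}, \cite{Bl123} and \cite{HM}, made applicable by the preceding equivalence of the three definitions of basic set, and your proposal follows exactly that route, merely supplying details the paper delegates to the references. One of those details does not survive scrutiny, however: for item (ii) you argue that a separable graph ``admits only countably many subgraphs up to this identification'', which is false --- already $[0,1]$ contains uncountably many subintervals with nonempty interior --- so nonemptiness of $\mathop{\rm int} P_\omega$ alone cannot yield countability, and one must instead use the finer structure of the generating cycles of subgraphs from Blokh's spectral decomposition (similarly, in (v) the claim that only finitely many fibres of $\varphi$ are nondegenerate is false for a Cantor-like $\omega$, though the bound you actually need, that each fibre meets $\omega$ in boundedly many points, is the correct one).
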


\section{Invariant chaos}

The following theorem is the main result of this note.
\begin{theorem}\label{mainThm}
Let $f$ be a continuous graph map then the following 
conditions are equivalent: 
\begin{enumerate}[(i)]
\item topological entropy of $f$ is positive;
\item there exists an $n \in \N$ such that $f^{n}$ has a 
  horseshoe (is turbulent);
\item there exists an $n \in \N$ such that $f^{n}$ has an 
  invariant Li-Yorke scrambled set;
\item there exists an $n \in \N$ such that $f^{n}$ exhibits 
  uniform distributional chaos with an invariant \DC1-scrambled set;
\item $f$ has a \DC\ pair of any type.
\end{enumerate}
\end{theorem}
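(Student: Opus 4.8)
The plan is to prove the chain of implications $(ii) \Rightarrow (i) \Rightarrow (vi)$ (using Theorem~\ref{PropBasSets}) $\Rightarrow (iii),(iv)$, and then close the cycle with $(iii) \Rightarrow (v)$ and $(iv) \Rightarrow (v)$ and finally $(v) \Rightarrow (i)$. The entry point is that the "hard" structural work has already been packaged into Theorem~\ref{PropBasSets}: positive entropy for graph maps forces the existence of a basic set (this is the graph analogue of the interval theory; a map with $h(f)>0$ has a horseshoe for some iterate, hence infinitely many periodic points accumulating on an infinite $\omega$-limit set containing a periodic point), and conversely item~(iii) of that theorem gives $h(f) \geq (\log 2)/(2n) > 0$ whenever a basic set exists, while item~(vi) gives a horseshoe for an iterate. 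So $(i)$ and $(ii)$ are easily seen to be equivalent, and both are equivalent to "$f$ has a basic set."

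First I would record $(ii) \Rightarrow (i)$: if $f^n$ has a horseshoe then $h(f^n) = n\, h(f) \geq \log 2$, so $h(f) > 0$. For $(i) \Rightarrow (ii)$ I invoke the basic-set dichotomy for graph maps — positive entropy yields a basic set $\omega$, and then part~(vi) of Theorem~\ref{PropBasSets} gives a horseshoe for some iterate. The heart of the argument is $(i) \Rightarrow (iv)$ and $(i) \Rightarrow (iii)$: starting from a basic set $\omega$ with periodic portion $P_\omega$ of period $n$, I pass to $f^n$ restricted to one connected component $G_0$ of $P_\omega$, which is a periodic subgraph on which $f^n$ has a basic set with no proper periodic subgraph. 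Using item~(v), $f^n|_{\omega_0}$ is almost-exactly semiconjugate to a transitive graph map $g$, and using item~(vii) one has a strong covering/mixing property: long iterates of any subarc meeting $\omega_0$ cover a fixed open set $U$. This is exactly the mechanism Oprocha uses on the interval to build, inside $\omega_0$, an invariant set on which the orbits get arbitrarily close infinitely often (Li-Yorke) yet with prescribed asymptotic separation statistics (uniform \DC1). Concretely I would build a Cantor-like scheme of "itineraries" through the horseshoe obtained in (vi), choosing the branching pattern so that blocks of long runs where points stay together alternate with blocks where they are driven apart by at least some fixed $\varepsilon$, with the lengths of the "together" blocks growing fast enough that the lower density of closeness is $0$ while the upper density of being $\varepsilon$-close is $1$; taking $S$ to be the closure of the orbit of the resulting Cantor set (or rather choosing the scheme to be forward-invariant from the start) yields an invariant uniform \DC1-scrambled set for $f^n$, which also is a Li-Yorke scrambled set, giving $(iii)$ simultaneously.

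Then $(iv) \Rightarrow (v)$ and $(iii) \Rightarrow (v)$: any two distinct points of a \DC1-scrambled set for $f^n$ form a \DC1-pair for $f^n$, hence a \DC-pair, and a \DC-pair for $f^n$ yields a \DC-pair for $f$ (one checks the distributional functions for $f$ are controlled by those for $f^n$ along the subsequence of multiples of $n$, possibly dropping from \DC1 to \DC3, which is harmless since the statement allows a \DC-pair "of any type"). Alternatively, since the paper recalls that all three \DC\ versions are equivalent for graph maps, one stays within \DC1. For $(iii) \Rightarrow (v)$ directly: it is known for graph maps that a Li-Yorke scrambled set that is \emph{invariant} (equivalently, arising for an iterate with invariance) forces positive entropy — but since we are building our way around the cycle it is cleaner to route $(iii)$ and $(iv)$ both through the common construction and then to $(v)$. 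Finally $(v) \Rightarrow (i)$: a \DC-pair (of any type) for a graph map implies positive topological entropy — this is the content of the cited results \cite{HM,MO} on graphs, where \DC3 (the weakest) already forces $h(f)>0$; quoting this closes the loop.

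The main obstacle I anticipate is $(i) \Rightarrow (iv)$ — producing a scrambled set that is simultaneously \emph{invariant} under $f^n$ and \emph{uniform} \DC1. On the interval (Oprocha's theorem) one exploits a full horseshoe and linear-like coding; on a general graph the semiconjugacy of item~(v) is only \emph{almost exact} (bounded-to-one, not injective), so the coding is slightly lossy and one must check that the bounded fibers do not destroy the density estimates, and that the constructed set can be chosen forward-invariant rather than merely having an invariant closure. Handling the "singular"/non-mixing directions is avoided by first reducing to $P_\omega$ without a proper periodic subgraph via item~(vii), which is the right normalization; after that reduction the argument parallels the interval case. The secondary technical point is the passage between $f$ and $f^n$ for the distributional functions in $(iv)\Rightarrow(v)$, but this is routine once one notes that $\frac1{kn}\xi_f(x,y,kn,t) \geq \frac1{kn}\xi_{f^n}(x,y,k,t)$ up to the obvious reindexing.
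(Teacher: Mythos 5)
Your overall architecture (a cycle of implications pivoting on positive entropy, with (v)$\Leftrightarrow$(i) quoted from the graph literature) matches the paper's, but two of your steps have genuine gaps. The first is the heart of the theorem: producing a scrambled set that is simultaneously \emph{invariant} and \emph{uniformly} \DC1. This is exactly the step you leave as a sketch, and the sketch as written fails. Taking $S$ to be the closure of the orbit of a Cantor set threaded through a horseshoe cannot work: such a closure contains periodic orbits and asymptotic pairs (for instance distinct points of a single periodic orbit, for which $\varrho(f^{i}(x),f^{i}(y))$ is periodic in $i$ and bounded away from $0$), so it is never scrambled. Your parenthetical fallback, ``choosing the scheme to be forward-invariant from the start,'' is not a repair but a restatement of the entire difficulty: one must arrange that every $x\in S$ forms a \DC1-pair with $f^{n}(x), f^{2n}(x),\dots$ simultaneously with forming \DC1-pairs with all other points of $S$, a self-referential itinerary construction that is the whole content of Oprocha's theorem. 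The paper does not redo this on the graph at all: it notes that a horseshoe for $f^{n}$ gives a closed invariant subset on which $f^{n}$ is conjugate to the full one-sided $2$-shift, quotes Theorem~1 of \cite{O} (the shift itself carries a uniform invariant \DC1-scrambled set), and pulls the set back through the conjugacy. Your detour through the basic set, the almost exact semiconjugacy of Theorem~\ref{PropBasSets}(v) and the covering property (vii) is unnecessary for this implication and introduces the bounded-to-one fiber problem you yourself flag; the horseshoe route avoids it entirely.

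The second gap is logical: your handling of condition (iii) does not close the cycle. To prove the equivalence you must show that an \emph{arbitrary} invariant Li-Yorke scrambled set for some $f^{n}$ implies one of the other conditions; ``routing (iii) through the common construction and then to (v)'' only shows that the particular set you built \emph{from} (i) has extra properties, which says nothing about (iii) as a hypothesis. The paper closes this with Lemma~\ref{lemmaInvar}, whose proof is the one genuinely new observation you are missing: if $x$ lies in an invariant Li-Yorke scrambled set then $x$ and $f(x)$ form a Li-Yorke pair, so $\liminf_{i}\varrho(f^{i}(x),f^{i+1}(x))=0$ forces a fixed point in $\omega_f(x)$ while the $\limsup$ condition forces $\#\omega_f(x)>1$; an $\omega$-limit set properly containing a cycle yields positive entropy, and $h(f^{n})=nh(f)$ finishes. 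You do mention the relevant fact as ``known,'' but you would need to actually invoke it rather than the circular alternative you propose. The remaining pieces of your argument --- (ii)$\Rightarrow$(i) via $h(f^{n})\geq\log 2$, (iv)$\Rightarrow$(iii) and (iv)$\Rightarrow$(v) by restriction to pairs and passage between $f$ and $f^{n}$, and (v)$\Rightarrow$(i) from \cite{HM,MO} --- are fine and consistent with the paper.
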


The following lemma is known for interval maps, we generalize it 
for the case of graph maps. 
\begin{lemma}
Let $f$ be a continuous graph map. Topological entropy of $f$ is 
positive if and only if there is an $\omega$-limit set  
containing a cycle but different from this cycle. 
\end{lemma}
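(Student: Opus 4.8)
The plan is to prove both implications by connecting the stated condition to the existence of a basic set, and then invoking Theorem~\ref{PropBasSets}. For the ``if'' direction, suppose $\omega$ is an $\omega$-limit set containing a cycle $Q$ but with $\omega \neq Q$; then $\omega$ is infinite and contains a periodic point, so by the definition of S-basic set $\omega$ is a basic set, and by Theorem~\ref{PropBasSets}(iii) we get $h(f) \geq (\log 2)/(2n) > 0$. One small point to check here: the definition of S-basic set requires $\omega$ to be a \emph{maximal} $\omega$-limit set, whereas the hypothesis only gives some $\omega$-limit set. This is handled by the classical fact (Sharkovsky, extended to graphs in \cite{HM}) that every $\omega$-limit set is contained in a maximal one; if $\omega$ contains a cycle and is not equal to it, then the maximal $\omega$-limit set $\tilde\omega \supseteq \omega$ is still infinite and, since it contains the cycle $Q$, it is a basic set. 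So the ``if'' direction is essentially immediate from the results already assembled.

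For the ``only if'' direction, assume $h(f) > 0$. The goal is to produce an $\omega$-limit set that strictly contains a cycle. The natural route is: positive entropy for graph maps implies (via \cite{HM} or \cite{LM}-type arguments) the existence of a basic set $\omega$; by Theorem~\ref{PropBasSets}(iv) periodic points are dense in $\omega$, so in particular $\omega$ contains a periodic point $p$, whose orbit is a cycle $Q \subseteq \omega$; and by Theorem~\ref{PropBasSets}(i) $\omega$ is perfect, hence infinite, so $\omega \neq Q$. This gives exactly an $\omega$-limit set containing a cycle but different from it. The one ingredient that is not literally on the list in Theorem~\ref{PropBasSets} is the implication ``$h(f) > 0 \Rightarrow$ a basic set exists''; I would cite this from \cite{HM} (the classification paper), or alternatively derive it from Theorem~\ref{mainThm}(ii), since positive entropy gives a horseshoe for some iterate $f^n$, and a horseshoe forces an infinite $\omega$-limit set of $f^n$ containing a fixed point of $f^n$, which one then reassembles into an $f$-invariant basic set by taking the orbit under $f$.

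The main obstacle is the ``only if'' direction, and specifically the subtlety that a horseshoe of $f^n$ does not immediately hand us a \emph{maximal} $\omega$-limit set of $f$ itself: one constructs, inside the horseshoe, a Cantor set on which $f^n$ is semiconjugate to the shift, picks a transitive point $z$ of the shift, pulls it back to a point whose $f^n$-orbit closure is an infinite set $\Lambda$ containing a fixed point of $f^n$, and must then argue that $\bigcup_{i=0}^{n-1} f^i(\Lambda)$ sits inside a single maximal $\omega$-limit set of $f$ that is still infinite and still meets the periodic points. This bookkeeping — passing between $f$ and $f^n$, and upgrading a non-maximal $\omega$-limit set to a maximal one without losing either infinitude or the contained cycle — is where the care is needed; the entropy estimate itself is then just a citation. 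I would therefore organize the argument so that the hard combinatorial work is quarantined in the single sentence ``positive entropy yields a basic set,'' justified by reference to \cite{HM}, and keep the rest of the proof as the short chain of equivalences through Theorem~\ref{PropBasSets}.
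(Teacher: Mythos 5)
Your proposal is correct and follows essentially the same route as the paper: for the ``if'' direction the paper likewise passes to a maximal $\omega$-limit set $\tilde\omega \supseteq \omega$ (citing \cite{MS}), notes it is infinite and contains a periodic point hence is a basic set, and concludes positive entropy; for the ``only if'' direction the paper simply cites Theorem~13 of \cite{HM}, which is the ``positive entropy yields a basic set'' step you correctly identified as the only nontrivial ingredient. Your extra care about maximality and about why the basic set is infinite and contains a cycle just makes explicit what the paper leaves to the citation.
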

\begin{proof}
Suppose that there is an $\omega$-limit set $\omega$ containing 
a cycle but different from this cycle. By \cite{MS} 
$\omega$ is contained in a maximal $\omega$-limit set $\tilde{\omega}$.
Set $\tilde{\omega}$ must be infinite (otherwise it would be a cycle) 
and therefore $\tilde{\omega}$ is a basic set. By Theorem~13 in 
\cite{HM} $f$ has positive entropy.

The converse implication easily follows from Theorem~13 in \cite{HM}
\end{proof}

\begin{lemma}\label{lemmaInvar}
Let $f$ be a continuous graph map. 
If $f$ has an invariant Li-Yorke scrambled set then $f$ 
has positive topological entropy.
\end{lemma}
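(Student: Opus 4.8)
The plan is to argue by contradiction: suppose $f$ has zero topological entropy but possesses an invariant Li-Yorke scrambled set $S$ with $f(S)\subset S$. The strategy is to locate, inside the dynamics generated by $S$, an $\omega$-limit set that contains a cycle but is strictly larger than that cycle; the preceding lemma then forces $h(f)>0$, a contradiction. The key point to exploit is invariance: since $f(S)\subset S$, for any $x\in S$ the entire forward orbit $\{f^n(x)\}_{n\ge 0}$ lies in $S$, hence the $\omega$-limit set $\omega(x)$ is an accumulation set of points of (the closure of) $S$, and in particular $\overline{S}$ contains $\omega(x)$ together with every forward orbit it generates.

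First I would fix two distinct points $x,y\in S$ forming a Li-Yorke pair, so that $\liminf_n \varrho(f^n(x),f^n(y))=0$ while $\limsup_n \varrho(f^n(x),f^n(y))=\delta>0$. Next I would use the structure theory of zero-entropy graph maps: when $h(f)=0$, every $\omega$-limit set is either a cycle or a solenoid (there are no basic sets by Theorem~\ref{PropBasSets}(vi), and singular sets also carry positive entropy on a suitable subgraph, or can be excluded by the same classification results invoked earlier). In the solenoid case $\omega(x)$ is nowhere dense with a well-understood nested-periodic-interval (adding-machine) structure, and points of $S$ whose orbits stay close infinitely often must in fact be asymptotic, contradicting $\limsup>0$; a careful version of this is the heart of the argument. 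In the cycle case $\omega(x)$ is a periodic orbit, and since $f^n(x)$ accumulates on it while the pair $(x,y)$ is proximal but not asymptotic, one extracts an $\omega$-limit set containing this cycle yet meeting $S$ in a way that makes it strictly bigger — again invoking invariance so that the relevant accumulation points genuinely lie in closures of orbits inside $\overline{S}$. Either way one produces an $\omega$-limit set containing a cycle and different from it, so the previous lemma yields $h(f)>0$, contradicting the assumption.

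The main obstacle I anticipate is ruling out the solenoidal (adding-machine) case cleanly: on a Feigenbaum-type attractor one does have proximal non-asymptotic behavior in the interval setting only when entropy is positive, but on graphs the combinatorics of nested periodic subgraphs is more delicate, and one must show that an invariant Li-Yorke pair cannot be supported on a single solenoid. I expect to handle this by passing to the induced maps on the periodic subgraphs of the solenoid: on each level the two orbits are eventually confined to the same periodic piece (forced by $\liminf=0$ together with the partition structure), which drives $\limsup_n \varrho(f^n(x),f^n(y))\to 0$ as the level tends to infinity, contradicting $\limsup=\delta>0$. The remaining steps — the cycle case and the final appeal to the lemma — are then routine.
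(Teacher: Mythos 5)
Your strategy has a genuine gap, and it is located exactly where you yourself flag the difficulty: the solenoid case. You propose to derive a contradiction from the mere existence of a Li--Yorke pair $x,y\in S$ under the assumption $h(f)=0$, by arguing that two orbits attracted to a solenoid which are proximal must be asymptotic. That claim is false: Sm\'{\i}tal's examples (cited in the introduction as \cite{Sm}) are precisely zero-entropy interval maps of type $2^\infty$ admitting uncountable Li--Yorke scrambled sets, with all infinite $\omega$-limit sets solenoidal. The components of the nested periodic orbits of intervals need not shrink in diameter as the level grows, so ``eventually in the same periodic piece at each level'' does not force $\limsup_n\varrho(f^n(x),f^n(y))\to 0$. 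Consequently the whole contradiction scheme collapses: zero entropy plus a Li--Yorke pair is simply not contradictory, which is the very reason the lemma requires \emph{invariance}. Your proposal mentions invariance only to note that orbits of points of $S$ stay in $S$, but never extracts from it anything that a non-invariant scrambled set would fail to provide; the cycle case is likewise left at the level of ``one extracts an $\omega$-limit set \ldots that makes it strictly bigger'' without an actual mechanism.

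The missing idea is to apply the Li--Yorke condition to the pair $(x,f(x))$. Since $f(S)\subset S$, for any non-fixed $x\in S$ the points $x$ and $f(x)$ are distinct members of $S$, hence form a Li--Yorke pair. Then $\liminf_{i\to\infty}\varrho(f^{i}(x),f^{i+1}(x))=0$ yields a subsequence with $f^{n_k}(x)\to z$ and $f^{n_k+1}(x)\to z$, so $f(z)=z$ and $\omega_f(x)$ contains a fixed point; while $\limsup_{i\to\infty}\varrho(f^{i}(x),f^{i+1}(x))>0$ shows $\omega_f(x)$ is not a single point. Thus $\omega_f(x)$ contains a cycle but differs from it, and the preceding lemma gives $h(f)>0$ directly --- no case analysis and no contradiction argument needed. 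This self-proximality of a single orbit to its own shift is the content of invariance that your outline does not use, and without it the statement cannot be reached.
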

\begin{proof}
Let $x$ be a point of invariant Li-Yorke scrambled set for $f$. 
The condition 
$$
  \liminf_{i\rarr\infty} \varrho(f^{i}(x),f^{i+1}(x)) = 0
$$
implies that $\omega_f(x)$ contains a fixed point. 
On the other hand from the condition 
$$
  \limsup_{i\rarr\infty} \varrho(f^{i}(x),f^{i+1}(x)) > 0
$$Ê
we get $\#\omega_f(x) >Ê1$. We finish the proof by applying 
the previous lemma.
\end{proof}

\begin{proof}[Proof 
of Theorem \ref{mainThm}]
The equivalency of (i) and (ii) has been proved in \cite{LM}. 

When $f^{n}$ has a horseshoe then on a closed invariant subset 
$f^{n}$ is conjugate to the full one-sided shift of two symbols. 
By Theorem~1 in \cite{O} this shift has a uniform invariant 
\DC1-scrambled set. By conjugacy we get scrambled set 
for $f^{n}$ with required properties. This proves (ii) implies (iv).


The implication from (iv) to (i) follows from Theorem~13 in \cite{HM}.

Now apply Lemma~\ref{lemmaInvar} to $f^{n}$ we have that  (iii) 
implies that $h(f^{n}) > 0$. Then by formula $h(f^{n}) = n h(f)$ we 
get (i).

The implication from (iv) to (iii) is trivial. 

The equivalency of (v) and (i) has been proved in \cite{MO}.
\end{proof}

\section*{Acknowledgments}
The author thanks to professors  \v{L}ubom\'{\i}r Snoha and Roman Hric for 
stimulating discussions and useful suggestions.
Research was funded by institutional support for the development of research
organization (I\v{C}47813059).

\end{document}